\newtheorem{theorem}{Theorem}
\newtheorem{ansatz}{Ansatz}
\newtheorem{corollary}[theorem]{Corollary}
\newtheorem{lemma}[theorem]{Lemma}
\newcommand{\inv}{^{-1}}
\author{Eric Nordenstam} 
\address{Fakult\"at f\"ur Mathematik, Universit\"at Wien, Garnisongasse 3 / 14, A-1090 Vienna, Austria}
\email{eric.nordenstam@univie.ac.at}
\thanks{Eric Nordenstam was supported by the Austrian Science Foundation FWF, grant Z130-N13.}
\author{Benjamin Young} 
\address{KTH Matematik, 100 44 Stockholm, Sweden.}
\email{benyoung@kth.se}
\thanks{Benjamin Young was supported by grant KAW 2010.0063 from the Knut and Alice Wallenberg
Foundation.}
\title[Correlations for the Novak process ]{Correlations for the Novak process }
\keywords{Tilings, non-intersecting lattice paths, Eynard-Mehta theorem,
experimental mathematics and inverse matrices. }
\begin{document}
\maketitle
\newcommand{\tv}{B} 
\newcommand{\xv}{A} 

\begin{abstract}
We study random lozenge tilings of a certain shape in the plane called
the \emph{Novak half-hexagon}, and compute the correlation functions for this process. 
This model was introduced by Nordenstam and Young (2011) and has many intriguing
similarities with a more well-studied model, domino tilings of the Aztec diamond. 
The most difficult step in the present paper is to compute the inverse of the 
matrix whose $(i,j)$-entry is the binomial coefficient $C(A, B_j-i)$ for indeterminate variables $A$ and $B_1$, \dots, $B_n$. 

\par

Nous \'etudions des pavages al\'eatoires 
d'une region dans le plan 
par des losanges 
qui s'appelle le \emph{demi-hexagone de Novak} et nous calculons 
les corr\'elations  de ce processus. Ce mod\`ele a \'et\'e introduit par Nordenstam
et Young (2011) et a plusieurs similarit\'es des pavages al\'eatoires 
d'un diamant azt\`eque 
par des dominos. 
La partie la plus difficile de cet article est le calcul de
l'inverse d'une matrice ou l'\'element $(i,j)$ est le coefficient binomial $C(B_j-i, A)$ pour 
des variables $A$ et $B_1$, \dots, $B_n$ indetermin\'es. 
\end{abstract}

\section{Introduction}

This paper is a continuation of the work in~\cite{NoYo11}, in which we
initiated a study of the \emph{Novak half-hexagon} of order $n$.  This is a
roughly trapezoid-shaped planar region (see Figure~\ref{fig:half-hexagon}),
which can be tiled with $3n(n+1)/2$ \emph{lozenges} --- rhombi composed of two
equilateral triangles.  The number of these tilings is computed
in~\cite{NoYo11} to be $2^{n(n+1)/2}$, the same as the well-studied Aztec
diamond (see~\cite{eklp}) and possesses a \emph{domino shuffling algorithm}
closely related to that of the Aztec diamond.  We were able to exploit this
similarity to prove an ``arctic parabola''-type theorem for the Novak
half-hexagon: that with probability tending to 1 as $n \rightarrow \infty$, the
tiling is trivial exterior to a parabola tangent to all three sides of the
figure.

The power-of-two tiling count, the existence of a domino shuffle and the simple limiting shape strongly suggest that it will be tractable to carry out the usual ``next step'' in the study of random tilings: namely, computing \emph{correlation functions} for the tiling.  Loosely speaking, the $k$-point correlation function gives the probability that a fixed set of $k$ lozenges will all be present in a lozenge tiling chosen with respect to the uniform measure on the set of all $2^{n(n+1)/2}$ such tilings.  There are a number of ways to compute these probabilities, all of which rely on the fact that the correlation functions are \emph{determinantal}, meaning that they can be computed as the determinant of a $k \times k$ matrix, whose entries are evaluations of a \emph{correlation kernel}.

If these probabilities can be computed exactly, one can attempt to do
asymptotic analysis of the correlation functions, and demonstrate that the
tiling exhibits \emph{universal} behaviour.  Here, \emph{universal} is a
loaded, technical term coming from statistical mechanics and random matrix
theory:  it means that the correlation functions tend to one of a handful of
well-studied and frequently-occurring limit laws which originally come from
random matrix theory.  For instance, at points near the ``arctic parabola'',
the correlations should tend to the \emph{Airy kernel} (see~\cite{Joh05}) and
in the bulk, they should tend to the \emph{Sine kernel}.  Many point processes
exhibit these limit laws and other related ones, including eigenvalue
distributions of random matrices~\cite{For10}, the Schur process~\cite{OkRe03},
the length of the longest row of a random permutation~\cite{Oko00,BaDeJo99},
continuous Gelfand-Tsetlin patterns~\cite{Met11}, domino tilings of the Aztec
Diamond~\cite{Joh05}, lozenge tilings of the regular hexagon~\cite{Joh05b} and
many more.

\subsection{Results}

In this paper, we compute the correlation kernel for a rather general class of
lozenge tiling problems, of which the half-hexagon is one (we cannot say
anything about its asymptotics yet).  The starting point of our method is the
Eynard-Mehta theorem, explained in Section~\ref{sec:Eynard-Mehta Theorem}.
This is a rather general theorem for computing the correlation functions for
processes which can be described as a product of row-to-row transfer matrices,
as ours can.  The Eynard-Mehta theorem gives the correlation kernel in terms of
the inverse of a certain matrix $M$.  For the half-hexagon, $M$ turns out to be
the \emph{Lindstr\"om-Gessel-Viennot} matrix~\cite{Lindstrom, Gessel-Viennot},
\begin{equation} \label{eqn:LGV matrix} M_{HH} = \left[ \binom{n+1}{2j - i}
\right]_{1 \leq i,j \leq n}, \end{equation} which computes the number of
tilings of the order-$n$ half-hexagon.  In fact, our methods required us to
invert a much more general matrix.

\begin{theorem}
\label{thm:main}
If $\xv, \tv_i (1 \leq i \leq n)$ are parameters and
\begin{equation}
\label{eqn:M definition}
M =\left[ \binom{\xv}{\tv_j - i}\right]_{i,j= 1}^{n},
\end{equation}
then
\begin{equation}
\label{eqn:M inverse}
[M\inv]_{i,j} = 
\binom{\xv+n-1}{\tv_i -1} \inv 
\sum_{k=1}^j \binom{\xv+n-1}{k-1} \binom{\xv-1+j-k}{j-k}
(-1)^{k+j} \prod_{l=1,\, l\neq i }^n \frac{k-\tv_l}{\tv_i - \tv_l}.
\end{equation}
\end{theorem}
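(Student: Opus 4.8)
The plan is to verify directly that the claimed formula for $[M^{-1}]_{i,j}$ satisfies $\sum_{j} M_{i,j} [M^{-1}]_{j,k} = \delta_{ik}$ (or the transposed identity), turning the problem into a binomial-coefficient identity in the indeterminates $\xv, \tv_1, \dots, \tv_n$. Since both sides are rational functions of these parameters, it suffices to prove the identity as a formal/polynomial identity; in particular I may specialize $\xv$ to a large positive integer and the $\tv_i$ to suitable integers when that is convenient, provided the resulting identity holds for enough values to force equality of rational functions. Let me spell out which product to compute. Writing $N_{j,k} := [M^{-1}]_{j,k}$ for the right-hand side of~\eqref{eqn:M inverse}, I would form $\sum_{j=1}^n \binom{\xv}{\tv_j - i} N_{j,k}$ and try to show it equals $\delta_{ik}$. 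The factor $\binom{\xv+n-1}{\tv_j-1}^{-1}$ inside $N_{j,k}$ is the awkward part, so I would first clear it: multiply through and recognize $\binom{\xv}{\tv_j-i} \big/ \binom{\xv+n-1}{\tv_j-1}$ as, up to factors independent of $j$, a ratio that can be rewritten using the reflection/Vandermonde structure. A cleaner route is to decouple the two indices: note that $N_{j,k}$ factors as $\big(\text{function of } k \text{ and } j\big) \times \prod_{l\neq j}\frac{k-\tv_l}{\tv_j-\tv_l}$, and the product $\prod_{l\neq j}\frac{X-\tv_l}{\tv_j-\tv_l}$ is the Lagrange interpolation basis polynomial at the nodes $\tv_1,\dots,\tv_n$.

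Concretely, I expect the proof to run as follows. First, rewrite the inner sum over $k$ in $N_{j,k}$: the expression $\sum_{k=1}^j \binom{\xv+n-1}{k-1}\binom{\xv-1+j-k}{j-k}(-1)^{k+j} p_i(k)$, where $p_i$ is the Lagrange basis polynomial vanishing at all $\tv_l$ with $l\neq i$, should be recognized as a finite difference / inverse-binomial-transform operation. Indeed $\binom{\xv-1+j-k}{j-k}(-1)^{j-k}$ is the coefficient extraction for $(1-t)^{-\xv}$-type generating functions, so $\sum_k \binom{\xv-1+j-k}{j-k}(-1)^{j-k} a_k$ inverts the transform $a_k \mapsto \sum \binom{\xv+\cdots}{\cdots} a$. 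Second, plug this back and swap the order of summation in $\sum_j \binom{\xv}{\tv_j-i} N_{j,k}$, so the $j$-sum becomes $\sum_j \binom{\xv}{\tv_j-i}\binom{\xv+n-1}{\tv_j-1}^{-1}\big/\prod_{l\neq j}(\tv_j-\tv_l)$ times the $k$-dependent data — this is a partial-fractions sum over the nodes $\tv_j$, which collapses via the residue theorem (the sum $\sum_j f(\tv_j)/\prod_{l\neq j}(\tv_j-\tv_l)$ is a divided difference, hence picks out a leading coefficient). Third, after these two collapses the remaining identity should reduce to the Chu--Vandermonde identity or a single application of it, yielding $\delta_{ik}$.

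The main obstacle, I expect, is handling the factor $\binom{\xv+n-1}{\tv_j-1}^{-1}$ cleanly: because it sits in a denominator it obstructs naive generating-function manipulations, and getting it to combine with $\binom{\xv}{\tv_j-i}$ and with $\prod_{l\neq j}(\tv_j-\tv_l)^{-1}$ into something to which Vandermonde or a residue computation applies is the delicate bookkeeping step. A secondary difficulty is keeping track of the upper summation limit $k \le j$ (as opposed to $k\le n$), since the triangular structure is exactly what makes $M^{-1}$ well-defined, and one must check that the "out of range" terms $k>j$ genuinely contribute zero rather than silently being dropped. I would address the first obstacle by the substitution $\tv_j - 1 \mapsto$ a new summation variable and rewriting the inverse binomial as $\binom{\xv+n-1}{\xv+n-\tv_j}$, then using $\binom{\xv}{\tv_j-i} = \binom{\xv}{\xv - \tv_j + i}$ so that both binomials have the same "flavour" and their ratio telescopes into a product of $n-1$ linear factors — precisely the kind of object that pairs with $\prod_{l\neq j}(\tv_j-\tv_l)$. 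Alternatively, if the direct verification proves too unwieldy, I would fall back on induction on $n$: the matrix $M$ for parameter set $\{\tv_1,\dots,\tv_n\}$ is related by a rank-one/bordering update to the one for $\{\tv_1,\dots,\tv_{n-1}\}$, and the Sherman--Morrison or Schur-complement formula would let me propagate the formula for $M^{-1}$, though verifying the resulting recursion for the explicit entries is itself a nontrivial binomial identity.
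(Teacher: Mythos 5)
Your plan is essentially the paper's proof: the paper likewise verifies $M M^{-1} = I$ directly, isolates your key collapse as a lemma --- observing that $\binom{\xv+n-1}{\tv_\beta-1}^{-1}\binom{\xv}{\tv_\beta-\alpha}$ is a polynomial of degree $n-1$ in $\tv_\beta$, so that summing it against the Lagrange basis polynomials $\prod_{l\neq\beta}\frac{k-\tv_l}{\tv_\beta-\tv_l}$ simply reproduces that polynomial at $t=k$ --- and then evaluates the remaining sum over $k$ by Vandermonde convolution to get $\binom{0}{\alpha-\gamma}=\delta_{\alpha,\gamma}$, exactly as you predict. The inverse-binomial-transform preprocessing of the $k$-sum and the induction/Sherman--Morrison fallback are not needed.
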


Then, the Eynard-Mehta theorem yields the following corollary, which will be shown in Section~\ref{sec:Eynard-Mehta Theorem}.
\begin{corollary}
\label{cor:eynard-mehta correlation functions}
The correlation functions for the Novak half-hexagon are determinental, with kernel given by
\begin{multline}
\label{eq:novak-kernel}
K(r, x; s, y ) =
-\phi_{r,s}(x,y) \\
+ \sum_{i,j=1}^n
\frac{\binom{n+1-r}{2i-x} \binom{s}{y-j}}{
 \binom{2n}{2i-1} }
\sum_{k=1}^{j} \binom{2n}{ k-1} \binom{n+j-k}{j-k} 
\frac{(-1)^{k+j+i+n}}{(i-1)! (n-i)!} 
 \prod_{l=1,\, l\neq i }^n (k-2l)
\end{multline}
where  $\phi\equiv 0$ for $r\geq s$ and 
\begin{equation}
\label{eq:phi}
\phi_{r,s}(x,y) = \binom{s-r}{y-x}
\end{equation}
for $r<s$.
\end{corollary}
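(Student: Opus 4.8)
The plan is to apply the Eynard-Mehta theorem directly, using the explicit matrix inverse from Theorem~\ref{thm:main} as the crucial input. The Eynard-Mehta theorem, once the model is set up as a product of row-to-row transfer matrices (as described in Section~\ref{sec:Eynard-Mehta Theorem}), expresses the correlation kernel in the general form
\begin{equation}
\label{eq:em-general}
K(r,x;s,y) = -\phi_{r,s}(x,y) + \sum_{i,j=1}^n \Psi^r_i(x)\, [M\inv]_{i,j}\, \Phi^s_j(y),
\end{equation}
where $\phi_{r,s}$ is the convolution of the transition kernels between times $r$ and $s$, the functions $\Psi^r_i$ and $\Phi^s_j$ are obtained by propagating the boundary data forward and backward to times $r$ and $s$ respectively, and $M$ is the Gram/LGV matrix whose inverse Theorem~\ref{thm:main} supplies. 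The entire corollary is then a matter of identifying each of these ingredients concretely for the Novak half-hexagon and substituting.

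First I would record the transition function for a single row-to-row step of the half-hexagon process; since the underlying non-intersecting lattice paths make binomial-coefficient steps, the $r$-to-$s$ convolution collapses (by Vandermonde's identity) to the single binomial $\phi_{r,s}(x,y)=\binom{s-r}{y-x}$ for $r<s$, matching~\eqref{eq:phi}, with $\phi\equiv 0$ when $r\geq s$ because no forward path exists. Next I would compute $\Psi^r_i(x)$ and $\Phi^s_j(y)$: propagating the initial and final conditions through the transfer matrices yields exactly the binomial factors $\binom{n+1-r}{2i-x}$ and $\binom{s}{y-j}$ appearing in~\eqref{eq:novak-kernel}. Here the specialization $\xv=n+1$ and $\tv_j=2j$ identifies the abstract matrix $M$ of~\eqref{eqn:M definition} with the concrete LGV matrix $M_{HH}$ of~\eqref{eqn:LGV matrix}, so that $[M\inv]_{i,j}$ is given by~\eqref{eqn:M inverse} evaluated at these values.

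**The substitution step** is where Theorem~\ref{thm:main} earns its keep. Setting $\xv=n+1$ and $\tv_l=2l$ in~\eqref{eqn:M inverse}, the prefactor $\binom{\xv+n-1}{\tv_i-1}\inv = \binom{2n}{2i-1}\inv$, the summand binomials become $\binom{2n}{k-1}\binom{n+j-k}{j-k}$, and the product $\prod_{l\neq i}(k-\tv_l)/(\tv_i-\tv_l)$ becomes $\prod_{l\neq i}(k-2l)/(2i-2l)$. I would then simplify the denominator $\prod_{l\neq i}(2i-2l)=2^{n-1}\prod_{l\neq i}(i-l)=2^{n-1}(-1)^{n-i}(i-1)!(n-i)!$, and fold the resulting sign and factorial normalization together with the $(-1)^{k+j}$ into the constant $(-1)^{k+j+i+n}/\big((i-1)!(n-i)!\big)$ shown in~\eqref{eq:novak-kernel}, leaving the un-cancelled numerator $\prod_{l\neq i}(k-2l)$. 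Plugging this, together with the $\Psi$ and $\Phi$ factors, into~\eqref{eq:em-general} reproduces~\eqref{eq:novak-kernel} term by term.

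**The main obstacle** will be the bookkeeping of constants and signs in this last reduction: the power of two from $\prod(2i-2l)$, the $(-1)^{n-i}$ from reordering that product, and the original $(-1)^{k+j}$ must be tracked carefully so that the final sign $(-1)^{k+j+i+n}$ and the factorial denominator emerge exactly as stated, rather than off by a sign or a factor of $2^{n-1}$. A secondary, more conceptual subtlety is verifying that the forward/backward propagation genuinely produces the claimed $\Psi^r_i$ and $\Phi^s_j$ and that the boundary conditions of the half-hexagon are encoded correctly in the Eynard-Mehta setup; once Section~\ref{sec:Eynard-Mehta Theorem} pins down the transfer matrices and boundary data, however, these are direct (if lengthy) binomial computations, and the genuinely hard analytic content has already been discharged by Theorem~\ref{thm:main}.
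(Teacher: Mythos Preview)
Your proposal is correct and follows essentially the same route as the paper: set up the half-hexagon as nonintersecting walkers with Bernoulli steps, read off $\phi_{r,s}(x,y)=\binom{s-r}{y-x}$, apply the Eynard-Mehta theorem with the LGV matrix $M=[\binom{N}{y_j-i}]$, plug in the inverse from Theorem~\ref{thm:main}, and then specialize $N=n+1$, $y_i=2i$. Your flagged ``main obstacle'' about the stray $2^{n-1}$ from $\prod_{l\neq i}(2i-2l)$ is well spotted---that factor does not visibly appear in~\eqref{eq:novak-kernel} as printed, so you are right to track it carefully when carrying out the substitution.
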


\begin{figure}
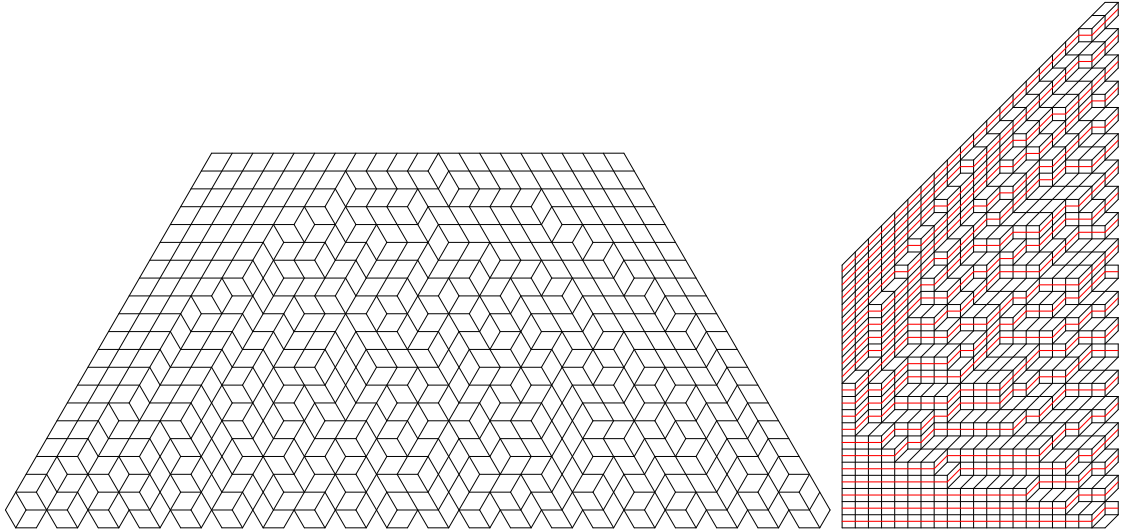

\includegraphics[height=5cm]{novak2.mps} 
\includegraphics[height=7cm]{novak1.mps}
\caption{To the left is a random tiling of an order 20 Novak half-hexagon. To the right is the  same tiling but rotated and skewed. As shown, a tiling corresponds to a set of non-intersecting random walks that at each time-step either stay or jump one unit step. }
\label{fig:half-hexagon}
\end{figure}

\subsection{Inverting a matrix}

Inverting a fixed matrix of numbers is trivial in a computer.  
Symbolically inverting an infinite family of matrices with many parameters is much harder and comprises the bulk of the work in this paper. 

We inverted $M$ with \emph{Cramer's rule}: compute the adjugate matrix $A_{ji}$
(the transposed matrix of cofactors) and divide by the determinant of $M$.
Krattenthaler~\cite{Kra99} gives many methods of evaluating such determinants;
indeed, his Equation (3.12) allows us to compute $\det M$.  Computing the determinant of the matrix of the adjugate matrix, however, is significantly harder, so we first guessed the answer using the computer algebra system
Sage~\cite{Sage}.  The manner in which this guessing was done was itself
nontrivial and may be of interest to others trying to invert matrices; some
details are given in Section~\ref{sec:how we inverted the matrix}.

Once we had conjectured the form of Theorem~\ref{thm:main} and simplified it
considerably, we were able to prove it simply by showing that $M M^{-1}$
is the identity matrix.

\subsection{Related Work}

Metcalfe~\cite{Met11} has developed an alternative approach to problems of this
type, by developing a theory of the asymptotics of a sort of interlacing
particle process.  The theory currently covers a slightly different setting, in
which the positions of the particles is continuous, but Metcalfe is in the process of extending his methods to the discrete setting.

A natural extension of this procedure would be to apply the ideas of
Borodin-Ferrari~\cite{BoFe08} to analyse the dynamics of the domino shuffling
algorithm described in~\cite{NoYo11}.

In~\cite{Joh05b}, there appears a slightly less general kernel, written in
terms of the Hahn polynomials; this is used to prove some theorems on the fluctuations of the frozen boundary of lozenge tilings of a hexagon.  

Acknowledgement:  The authors are extremely grateful to Christian Krattenthaler for many helpful suggestions. Nordenstam wishes to thank Leonid Petrov for some interesting discussions. 

\section{An inverse matrix}
\label{sec:how we inverted the matrix}
Recall that we want to compute the inverse of the matrix $M$ from~\eqref{eqn:M definition} by computing cofactors.  The method of computation is the standard approach of experimental mathematics:  First we guess the answer, making no attempt to be mathematically rigorous.  Then, we prove our guess rigorously, by showing that $MM^{-1}$ is the identity matrix.  As the reader may imagine, the proof alone is not too helpful for guiding people who want to tackle similar matrix inversions in their own work, so we include here an account of how we were able to guess the expression in Theorem~\ref{thm:main}.

Since $M$ is symmetric in its columns, striking out column $k$ simply removes all instances of the variable $\tv_k$ from $M$.  Rename the remaining $\tv$-variables $\bar\tv_1$, \dots, $\bar\tv_{n-1}$. Removing a row, say number $s$,  is more complicated and splits the matrix into two blocks.  To get ready to make our first round of guesses, we take out as many factors as possible so that the matrix elements are now integer polynomials in the variables. The remaining matrix can be written 
\begin{multline}
\det \left[ \binom{\xv}{\bar\tv_j- i - \mathbbm{1}\{i \geq s\}} \right] _{i,j=1}^{n-1}
= \\
\left(\prod_{i=1}^{n-1} \frac{\xv!}{ (\bar\tv_i-1) ! (\xv-\bar\tv_i+n)!} \right)
\det
\overbrace{
\left[
\rule{2mm}{0mm}
\begin{matrix}
\\
(\bar\tv_i -  j + 1) \cdots (\bar\tv_i-1) ( \xv-\bar\tv_i+j+1 ) \cdots (\xv-\bar\tv_i+n)  \\
\\
\hline
\\
(\bar\tv_i -  j ) \cdots (\bar\tv_i-1) ( \xv-\bar\tv_i+j+2 ) \cdots (\xv-\bar\tv_i+n) \\
\\
\end{matrix} 
\rule{2mm}{0mm}\right] 
}^{n-1 \text{ columns}}
\begin{matrix}
\scriptstyle s-1 \\
\rule{0cm}{0.8cm} \\
\scriptstyle n-s 
\end{matrix}.
\end{multline}

 Let $P_{n, s} (\xv, \bar\tv) $ be the value of the second determinant.  Because $P_{n,s}$ is antisymmetric in $\bar\tv_i$, it is divisible by the order $n-1$ Vandermonde determinant; once this is done, the remaining portion is \emph{symmetric}, so we expand it as a (linear!) combination of the elementary symmetric functions $e_{l}$.
We started by computing $P$ for a few different values of the parameters $n$ and $s$. For $s=1$ one quickly conjectures 
\begin{equation}
P_{n, 1} (\xv, \bar\tv) = 
 \Delta(\bar\tv) \left(\prod_{i = 1}^{n-2}(\xv+i)^{n-1-i}\right) 
\left(\prod_{j=1}^{n-1}(\bar\tv_j-1)\right)  
\end{equation} 
where $\Delta$ means taking the Vandermonde determinant in the variables. 
For $s=2$,  \texttt{sage} gave us 
\begin{align*}
P_{3,2} (\xv, \bar\tv) =&
 (\xv+1) \Delta(\bar\tv)  (-2e_2(\bar\tv) + (\xv+4) e_1(\bar\tv) - (3\xv+8))\\
P_{4,2} (\xv, \bar\tv) =&
 (\xv+1)^2(\xv+2) \Delta(\bar\tv) (-3e_3(\bar\tv) + (\xv+6) e_2(\bar\tv) - (3\xv+12)e_1(\bar\tv) + (7\xv+24) )\\
P_{5,2} (\xv, \bar\tv) =&
 (\xv+1)^3(\xv+2)^2(\xv+3) \Delta(\bar\tv) (-4e_4(\bar\tv) +(\xv+8)e_3(\bar\tv) - (3\xv+16) e_2(\bar\tv) \\ &+ (7\xv+32)e_1(\bar\tv) - (15\xv+64) )\\
P_{6,2} (\xv, \bar\tv) =&
 (\xv+1)^4(\xv+2)^3(\xv+3)^2(\xv+4)  \Delta(\bar\tv)(-5e_5(\bar\tv) +(\xv+10)e_4(\bar\tv) \\&- (3\xv+20) e_3(\bar\tv) + (7\xv+40)e_2(\bar\tv) - (15\xv+80)e_1(\bar\tv) +(31\xv+160) )
\end{align*}
Following the immortal advice of David P. Robbins\footnote{``When faced with combinatorial enumeration problems, I have a habit of trying to make the data look similar to Pascal's triangle''.~\cite{Ro91}}, we wrote the coefficients of this four-parameter expression in a tidy fashion, and applied the standard tools in experimental mathematics~\cite{oeis, wikipedia} to all the integer sequences we noticed.
There were many patterns.  For instance, the Stirling numbers of the second kind
$S(n,k)$ appeared in some the coefficients, as did the numbers $n^k$ and
$(n+1)^k-n^k$.  Since the Stirling numbers have the form
\[
S(n,k) = \frac{1}{k!}\sum_{j=0}^k(-1)^{k-j} \binom{k}{j} j^n
\]
and since all of the coefficients we computed seemed to grow exponentially as the index of the elementary symmetric function $l$ decreased, we made the following ansatz: 
\begin{ansatz}
\label{ansatz:exponential growth}
The coefficient of $\xv^ke_{n-1-l}(\bar\tv)$ in $P_{n,s}$ is of the form 
\[
\frac{1}{s!} \sum_{j=0}^s f_{k,l,s,j}(n) j^n,
\] 
where $f_{k,l,s,j}$ is a low-degree polynomial.
\end{ansatz}

We asked \texttt{sage} to find polynomials $f_{k,l,s,j}$ in Ansatz~\ref{ansatz:exponential growth} to fit the data, and to compute more terms.  Computing more terms required heavy optimization of the \texttt{sage} code and, eventually, running the code on a very powerful computer.  After once again writing $f_{k,l,s,j}(n)$ in a tidy table and dividing out some obvious common factors, we noticed a new set of patterns: some of the $f_{k,l,s,j}(n)$ were $i$th derivatives of the \emph{falling factorial functions} $(n-1)(n-2)\cdots(n-k)$.  As such, we made a second ansatz:
\begin{ansatz}
\label{ansatz:falling factorial}
\emph{All} of the $f_{k,l,s,j}(n)$ are linear combinations of falling factorials or their derivatives.
\end{ansatz}
Again, we asked Sage to compute the coefficients of these linear combinations for the data we had.  This time we were able to guess the formula completely.  In the end we conjectured that 
\begin{multline}
\label{eqn:unsimplified_guess}
P_{n,s}(\xv,\bar\tv)  =  \Delta(\bar tv) 
\prod_{r=1}^{n-2} (\xv+r)^{n-1-r}
\times \\
\times
\sum_{l=0}^{n-1} 
\sum_{k=0}^{s-1} 
\sum_{j=1}^{s} 
\sum_{i=0}^j 
(-1)^{n+s+l+j}
\frac{\xv^k j^l e_{n-1-l} (\bar\tv)  }{i!(s-1)!}
s(s-1-j, k-i) \left ( \left(\frac{d}{dn}\right)^i (n-1)\cdots (n-j)\right) 
 \binom{s-1}{j} 
\end{multline}
where $s(n,k)$ are the Stirling numbers of the first kind. 

Obviously, \eqref{eqn:unsimplified_guess} needs to be simplified. By the generating function 
for the Stirling numbers, 
\begin{multline}
\sum_{k=0}^s s(s-1-j, k-i) \xv^k = \xv^i\sum_{k=0}^s s(s-1-j, k-i) \xv^{k-i}\\
 = 
\xv^i [ \xv(\xv-1)\dots (\xv-s+j+2) ] = \xv^i (s-1-j)! \binom{\xv}{s-1-j}.
\end{multline}
By the Binomial Theorem, 
\begin{equation}
\sum_{i=0}^j \frac{\xv^i}{i!} \left( \frac{d}{d\xv} \right) ^i n^\alpha 
=
\sum_{i=0}^j \frac{\alpha(\alpha-1)\cdots (\alpha-i+1)}{i!}  n^{\alpha-i}  = 
\sum_{i=0}^j \binom{\alpha}{i} \xv^i n^{\alpha-i}  = 
 (n+\xv)^\alpha
\end{equation}
by the definition of Binomial coefficients,
\begin{equation}
(n+\xv-1)\cdots (n+\xv-j) = j! \binom{n+\xv-1}{j}
\end{equation}
and lastly, by the generating function for the elementary 
symmetric polynomials, 
\begin{equation}
\sum_{l=0}^{n-1} (-j)^{n-1-l} e_l(\bar\tv) = \prod_{l=1}^{n-1} (\bar\tv_l-j)
\end{equation}

With these simplifications we can write 
\begin{equation}
P_{n,s}(\xv,\bar\tv)  =  \Delta(\bar \tv) 
\prod_{r=1}^{n-2} (\xv+r)^{n-1-r}
\sum_{j=0}^{s-1} 
(-1)^{n+s+j} 
\binom{\xv}{s-1-j} \binom{ n+\xv-1}{j} \prod_{l=1}^{n-1} (\bar\tv_l-j)
\end{equation}

Now to get the inverse matrix we should transpose the cofactor matrix and divide with the determinant of the full matrix. The latter can be found through 
\begin{equation}
\det\left [ \binom{A}{L_i+j} \right ] _{i,j=1}^n = 
\frac{\prod_{1\leq i<j\leq n}  (L_i-L_j) \prod _{i=1}^n (A+i-1)!}{
\prod_{i=1}^n (L_i + n)! \prod_{i=1}^n (A-L_i -1)!}
\end{equation}
which is a special case of \cite[equation (3.12)]{Kra99}. After a bit of simplification, Cramer's rule then leads us to conjecture that~\eqref{eqn:M inverse} is the inverse of $M$. 

\begin{proof}[of Theorem~\ref{thm:main}]
We have now, through these computer experiments, found a formula which we believe expresses $M\inv$. To prove that this guess is correct, we need to show that either $M M\inv = I$ or that $M\inv M = I$ using that formula. One of these (the latter) is easy, the other is hard.   First we write
\begin{multline}
\label{eq:inverseproof1} 
[M M\inv  ] _{\alpha, \gamma}  = \sum_{\beta = 1} ^n
[M]_{\alpha, \beta}[M\inv] _{\beta, \gamma} \\
= \sum_{\beta = 1} ^n \sum_{k=1}^\gamma (-1)^{k+\gamma} \binom{\xv+n-1}{\tv_\beta-1} \inv
\binom{\xv+n-1}{k-1} \binom{\xv-1+\gamma-k}{\gamma-k} \binom{\xv}{\tv_\beta - \alpha} 
\prod_{i=1, \, i\neq \beta}^n \frac{k-\tv_i}{\tv_\beta-\tv_i} .
\end{multline}
Next, we need the following technical lemma, to remove the variables $B_i$ from the equation. 
\begin{lemma}
\label{lem:lagrange}
\begin{equation}
\sum_{\beta = 1} ^n\binom{\xv+n-1}{\tv_\beta-1} \inv\binom{\xv}{\tv_\beta - \alpha} 
\prod_{i=1, \, i\neq \beta}^n \frac{k-\tv_i}{\tv_\beta-\tv_i}  = 
\binom{\xv+n-1}{k-1} \inv \binom{\xv}{k-\gamma}
\end{equation}
\end{lemma}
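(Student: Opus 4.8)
The plan is to recognise the left-hand side as nothing more than a Lagrange interpolation formula. Fix $\alpha$ with $1\le\alpha\le n$ and set
\[
F(t):=\binom{\xv+n-1}{t-1}\inv\binom{\xv}{t-\alpha},
\]
regarded as a function of the single variable $t$ with $\xv$ a formal parameter. If I can show that $F$ is a polynomial in $t$ of degree at most $n-1$ (with coefficients in the field $\mathbb{Q}(\xv)$), then the classical interpolation identity
\[
\sum_{\beta=1}^{n}F(\tv_\beta)\prod_{i=1,\,i\neq\beta}^{n}\frac{k-\tv_i}{\tv_\beta-\tv_i}=F(k),
\]
valid for any polynomial of degree $\le n-1$ and distinct nodes $\tv_1,\dots,\tv_n$, immediately identifies the left-hand side of the lemma with $F(k)$. (The sum is visibly independent of $\gamma$, so the $\gamma$ in the statement should read $\alpha$.) It then only remains to reassemble $F(k)$ into the product of binomial coefficients on the right.

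The one substantive step is therefore the claim that $F(t)\in\mathbb{Q}(\xv)[t]$ has degree exactly $n-1$. I would prove this by writing the two binomial coefficients as ratios of products of linear factors and cancelling, which yields
\[
F(t)=\frac{(t-1)(t-2)\cdots(t-\alpha+1)\cdot(\xv+n-t)(\xv+n-t-1)\cdots(\xv+\alpha-t+1)}{(\xv+1)(\xv+2)\cdots(\xv+n-1)}.
\]
The first product in the numerator is a polynomial in $t$ of degree $\alpha-1$ (the empty product $1$ when $\alpha=1$), the second is a polynomial in $t$ of degree $n-\alpha$ (empty when $\alpha=n$), and the denominator does not involve $t$; hence $F$ is a polynomial in $t$ of degree $(\alpha-1)+(n-\alpha)=n-1$, and we have exactly the $n$ nodes needed for interpolation to be exact. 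Substituting $t=k$ and folding the numerator products back into $(k-1)!/(k-\alpha)!$ and $(\xv+n-k)!/(\xv+\alpha-k)!$ recovers $\binom{\xv+n-1}{k-1}\inv\binom{\xv}{k-\alpha}$, completing the argument.

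The main obstacle is really just the careful bookkeeping in the two factorial rewritings together with the boundary cases. In particular one should check that when $1\le k\le\alpha-1$ the first numerator product vanishes, which matches $\binom{\xv}{k-\alpha}=0$ for a negative lower index, and that the degenerate ranges $\alpha=1$ and $\alpha=n$ still produce a polynomial of degree $\le n-1$ so that the interpolation identity applies; distinctness of $\tv_1,\dots,\tv_n$ is automatic since these index the (distinct) columns of $M$. No asymptotic or analytic input is needed: the lemma is a polynomial identity over $\mathbb{Q}(\xv)$, which drops out as soon as $F$ is seen to be a polynomial of the right degree.
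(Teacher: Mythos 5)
Your proof is correct and takes essentially the same route as the paper's: both recognize the left-hand side as Lagrange interpolation at the nodes $B_1,\dots,B_n$ of the function $F(t)=\binom{A+n-1}{t-1}^{-1}\binom{A}{t-\alpha}$, which after cancellation is a polynomial in $t$ of degree $(\alpha-1)+(n-\alpha)=n-1$, so the interpolant reproduces $F(k)$ exactly. Your remark that the $\gamma$ on the right-hand side should be $\alpha$ is also well taken --- the left-hand side does not involve $\gamma$, and this is a typo in the statement as printed.
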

\begin{proof}
Recall from an undergraduate course how Lagrange interpolation works. 
Let's say you want to fit a polynomial $y=p(x)$ of degree $n-1$ to points
$(x_1, y_1)$, \dots, $(x_n, y_n)$. What you do is you define functions
\begin{equation}
\tau_\beta(x) = \prod _{i=1, \, i\neq \beta}^n \frac{x-x_i}{x_\beta-x_i} 
\end{equation}
and then you compute your polynomial $p$ by
\begin{equation}
p(x) = \sum_{\beta = 1}^n  y_\beta \tau_\beta(x). 
\end{equation}
The sum in the LHS of the Lemma is of exactly this form. Moreover,
\[\binom{\xv+n-1}{t-1} \inv \binom{\xv}{t-\alpha} =
\frac{\xv!}{(\xv+n)!} (t-1)\cdots (t-\alpha + 1) ( \xv-t+n ) \cdots (\xv-t+\alpha+1) \]
is a polynomial of degree $\alpha-1 + n-\alpha = n-1$ in $t$. 
So this sum does Lagrange interpolation of degree $n-1$ to
an expression that is already a polynomial of that degree. 
Replacing the sum with the correct polynomial proves the Lemma. 
\end{proof}

Application of Lemma~\ref{lem:lagrange} reduces~\eqref{eq:inverseproof1} 
to 
\[
[MM\inv ] _{\alpha, \gamma}  =  \sum_{k=1}^\gamma(-1)^{\beta+j} 
 \binom{\xv-1+\beta-k}{\beta-k} \binom{\xv}{k - \gamma}  
\]  
This sum can be computed through  Vandermonde convolution, 
as in~\cite[Equation (5.25)]{GrKnPa94}, showing  that \[
[MM\inv ] _{\alpha, \gamma} = \binom{0}{\alpha-\gamma}  = \delta_{\alpha, \gamma},
\]
which proves that we have indeed found the correct inverse matrix. 
\end{proof}
\section{Eynard-Mehta theorem}
\label{sec:Eynard-Mehta Theorem}
%

In order to compute correlation functions, we must first describe tilings of the Novak half-hexagon as an ensemble of nonintersecting lattice paths (see Figure~\ref{fig:half-hexagon}).

Consider $n$ walkers on the integer line, started at time 0 at positions $x^{(0)}_1$, $x^{(0)}_2$, 
\dots,  $x^{(0)}_n$.
At time $N$ they end up at positions $x^{(N)}_1$, $x^{(N)}_2$, \dots, 
$x^{(N)}_n$.
 At tick $t$ of the clock they each take a step
according to the transition kernel $\phi_t$. 
In our special case, they either stay where they are or move one step to the right:
\begin{equation}
\label{eq:transition}
\phi_t(x,y) = \delta_{x,y} + \delta_{x+1, y},
\quad \text{$t=0$, \dots, $N-1$.}
\end{equation}
In addition, they are conditioned never to intersect. 
Let the positions of the walkers at time $t$ be  denoted 
 $x^{(t)} = (x^{(t)}_1 <  \dots <  x^{(t)}_n)\in \mathbb{N}^n$ and let 
a full configuration be denoted $x=(x^{(0)}, \dots, x^{(N)})$. 

Then uniform  probability on these configurations can be written
\begin{equation}
p(x) = \frac{1}{Z}\prod_{t=0}^{N-1} \det [\phi_t(x^{(t)}_i, x^{(t+1)}_j ) ]_{i,j=1}^n.
\end{equation}

The normalization constant $Z$ is the total number of configurations. 
For the sake of notation define the convolution product $*$ by 
\[
f * g (x,z) = \sum_{y \in \mathbb{Z}} f(x,y) g(y,z)
\] 
and let 
\[
\phi_{s,t} (x,y)= 
\begin{cases}
(\phi_s*\cdots*\phi_{t-1})(x,y), &  s<t,\\
0, & \text{otherwise.  }
\end{cases}
\]

By the Lindstr\"om-Gessel-Viennot Theorem~\cite{Lindstrom, Gessel-Viennot}, the total number of 
configurations is given by the determinant of the matrix
\begin{equation}
\label{eq:lindstrom-matrix}
M = [ \phi_{0, N} ( x^{(0)}_i,   x^{(N)}_j ) ] _{i,j=1}^N. 
\end{equation}

Correlations can now be computed using the Eynard-Mehta Theorem. 
Readable introductions to it can be found in~\cite[Section 5.9]{For10}
as well as in~\cite{BoRa05}. 
\begin{theorem}[Eynard-Mehta]
Let $m$ be a positive integer and let 
$(t_1, x_1)$, \dots, $(t_m, x_m)$ be a sequence of times and positions. 
The probability that there is a walker at time $t_i$ at position $x_i$ 
for each $i=1$, \dots, $m$ is given by 
\[
\det [ K( t_i, x_i; t_j, x_j) ] _{i,j=1}^m 
\]
where the function $K$, called the kernel of the process,
is given by 
\[
K(r, x; s, y ) = 
- \phi_{r,s}(x,y) + \sum_{i,j=1}^n  \phi_{r, N} (x, x^{(N)}_i  ) [M\inv ] _{i,j} \phi_{0, s} (x^{(0)}_j, y)
\]
\end{theorem}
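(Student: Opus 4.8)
The Eynard--Mehta theorem is classical, and for a completely rigorous treatment one can appeal directly to \cite[Section~5.9]{For10} or to \cite{BoRa05}; the plan here is to indicate the argument in the notation just introduced. The only tool needed is Andr\'eief's identity (the generalized Cauchy--Binet identity): for any $f_i,g_i:\mathbb Z\to\mathbb C$,
\[
\sum_{y_1<\dots<y_n}\det\bigl[f_i(y_j)\bigr]_{i,j=1}^n\,\det\bigl[g_i(y_j)\bigr]_{i,j=1}^n=\det\Bigl[\,\textstyle\sum_{y\in\mathbb Z}f_i(y)\,g_j(y)\Bigr]_{i,j=1}^n.
\]
Every factor $\det[\phi_t(x^{(t)}_i,x^{(t+1)}_j)]$ in $p(x)$ is alternating in the row variables $x^{(t)}$ and in the column variables $x^{(t+1)}$ separately, so summing $p(x)$ over the internal slices $x^{(1)},\dots,x^{(N-1)}$ with the endpoints $x^{(0)},x^{(N)}$ held fixed --- applying the identity once per internal slice --- telescopes the chain of transfer matrices into $\det[\phi_{0,N}(x^{(0)}_i,x^{(N)}_j)]=\det M$. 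This re-derives $Z=\det M$ and, crucially, shows $M$ is invertible, so that $M\inv$ in the statement makes sense.

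For the correlation functions I would write the probability that every prescribed space--time point $(t_a,x_a)$ is occupied as the expectation under $p$ of $\prod_{a=1}^m\bigl(\sum_i\mathbbm{1}\{x^{(t_a)}_i=x_a\}\bigr)$; since the particles on any fixed slice have distinct positions, each inner sum is simply the occupation indicator of that point. Expanding the transfer-matrix determinants multilinearly and ``pinning'' one row and one column at each marked time, one carries out the same Andr\'eief collapse as above on the portions of the chain between consecutive marked times; this produces convolutions $\phi_{r,s}$ along those portions and, in the end, a single determinant which --- after a Schur-complement step that uses the time-$N$ endpoint data as a copy of $M$ to be divided out, exactly as in a cofactor/Cramer computation --- reduces to an $m\times m$ determinant. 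Its entries assemble from three ingredients: the forward propagator $\phi_{r,N}(x,x^{(N)}_i)$ from a marked time out to time $N$, the matrix $[M\inv]_{i,j}$, and the backward propagator $\phi_{0,s}(x^{(0)}_j,y)$ from time $0$ in to a marked time. When two marked times satisfy $r<s$ there is in addition a direct contribution along the single stretch from $r$ to $s$, which supplies the term $-\phi_{r,s}(x,y)$; for $r\ge s$ there is no such stretch, which is exactly the stated convention $\phi_{r,s}\equiv0$. Reading off the resulting $m\times m$ determinant gives $\det[K(t_a,x_a;t_b,x_b)]$ with $K$ as claimed.

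The main obstacle is the bookkeeping in this second step: keeping signs straight through the multilinear expansion and handling configurations in which several marked points share a slice or lie on neighbouring slices. The cleanest way to organize it is to observe first that a probability measure proportional to a product of transfer-matrix determinants is of the ``$L$-ensemble'' type treated in \cite{BoRa05} --- up to a sign and the normalization, $p(x)$ is a principal minor of an explicit block matrix built from the $\phi_t$ --- and then to quote the general determinantal-process statement for such ensembles from \cite{BoRa05} (or \cite[Section~5.9]{For10}). That reduces the theorem to identifying the blocks and performing one block inversion, after which $M\inv$, the convolutions $\phi_{r,s}$, and the correction term $-\phi_{r,s}$ all drop out of a routine computation. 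Specializing $K$ to the transition kernel~\eqref{eq:transition} --- so that $\phi_{r,s}(x,y)=\binom{s-r}{y-x}$ --- and inserting the explicit $M\inv$ from Theorem~\ref{thm:main} then yields Corollary~\ref{cor:eynard-mehta correlation functions}.
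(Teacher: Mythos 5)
The paper offers no proof of this statement---it is quoted as a classical result, with pointers to \cite[Section 5.9]{For10} and \cite{BoRa05}---and your proposal ultimately does the same thing, reducing the theorem to the general conditional-$L$-ensemble result in those references after identifying the block structure built from the $\phi_t$. The additional sketch you supply (Andr\'eief's identity to telescope the transfer-matrix chain, giving $Z=\det M$ and hence the invertibility of $M$, followed by the Borodin--Rains block inversion that produces the $\phi_{r,N}\,M\inv\,\phi_{0,s}$ term and the $-\phi_{r,s}$ correction) is a correct outline of the standard argument, so your treatment is consistent with, and slightly more explicit than, the paper's.
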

In our particular case the walkers are going to start densely packed. 
At first we shall leave the end time $N$ and the endpoints unspecified, i.e. 
\begin{align*}
x^{(0)}_i &= i, \\
x^{(N)}_i &= y_i,
\end{align*}
for $i = 1$, \dots, $n$. The particular transition
function~\eqref{eq:transition} gives $\phi_{r,s}$ as defined in~\eqref{eq:phi}.
Inserting that into~\eqref{eq:lindstrom-matrix} gives 
\[
M = \left [ \binom{N}{y_j - i} \right] _{i,j=1}^n,
\]
which is exactly the matrix we inverted in the previous section. 
The kernel can then be written
\begin{multline}
\label{eq:general-kernel}
K(r, x; s, y ) =
-\phi_{r,s}(x,y) \\
+ \sum_{i,j=1}^n
\frac{\binom{N-r}{y_i-x} \binom{s}{y-j}}{ \binom{N+n-1}{y_i-1} }
\sum_{k=1}^{j} \binom{N+n-1}{ k-1} \binom{N-1+j-k}{j-k} (-1)^{k+j} 
\prod_{l=1,\, l\neq i }^n\frac{k-y_l}{y_i-y_l}.
\end{multline}
We state the result in this generality because the 
kernel derived in~\cite{Joh05b} is a special case for suitable
choices of $N$ and $y_i$ in the sense that they are correlation
kernels for the same process. It is not at all clear how to algebraically
relate~\eqref{eq:general-kernel} with the formula in~\cite[Theorem 3.1]{Joh05b}, 
since the latter is a sum involving products of Hahn polynomials.

In our particular case $N = n+1$, 
and the end positions are fixed as $y_i = 2i$ for $i=1$, \dots, $n$.
This specialisation leads to the expression in 
Corollary~\ref{cor:eynard-mehta correlation functions}.

\bibliographystyle{alpha}
\bibliography{references}
\label{sec:biblio}

\end{document}